\newtheorem{theorem}{Theorem}
\newtheorem{corollary}[theorem]{Corollary}
\theoremstyle{definition}
\title{Almost Equal Summands in Waring's Problem with Shifts}
\author{Kirsti D. Biggs}
\address{School of Mathematics, University of Bristol, University Walk, Clifton, Bristol, BS8 1TW, United Kingdom}
\email{kirsti.biggs@bristol.ac.uk}
\subjclass[2010]{11D75, 11P05}
\keywords{Waring's problem, Diophantine inequalities}
\thanks{The author is supported by EPSRC Doctoral Training Partnership EP/M507994/1}
\begin{document}

\begin{abstract}
A result of Wright from 1937 shows that there are arbitrarily large natural numbers which cannot be represented as sums of $s$ $k$th powers of natural numbers which are constrained to lie within a narrow region. We show that the analogue of this result holds in the shifted version of Waring's problem.
\end{abstract}

\maketitle

Waring's problem with shifts asks whether, given $k,s\in\mathbb{N}$ and $\eta\in(0,1]$, along with shifts $\theta_1,\dotsc,\theta_s\in(0,1)$ with $\theta_1\not\in\mathbb{Q}$, we can find solutions in natural numbers $x_i$ to the following inequality, for all sufficiently large $\tau\in\mathbb{R}$:
\begin{equation}\label{ineq}
\left|(x_1-\theta_1)^k+\dotsc+(x_s-\theta_s)^k-\tau\right|<\eta.
\end{equation}
This problem was originally studied by Chow in \cite{chowWP}. In \cite{Biggs}, the author showed that an asymptotic formula for the number of solutions to (\ref{ineq}) can be obtained whenever $k\geq 4$ and $s\geq k^2+(3k-1)/4$. The corresponding result for $k=3$ and $s\geq 11$ is due to Chow in \cite{cubes}.

An interesting variant is to consider solutions of (\ref{ineq}) subject to the additional condition
\begin{equation*}
\left|x_i-(\tau/s)^{1/k}\right|<y(\tau),\quad(1\leq i\leq s),
\end{equation*}
for some function $y(\tau)$. In other words, we are confining our variables to be within a small distance of the ``average'' value.

In 1937, Wright studied this question in the setting of the classical version of Waring's problem, and proved in \cite{equalsq} that there exist arbitrarily large natural numbers $n$ which cannot be represented as sums of $s$ $k$th powers of natural numbers $x_i$ satisfying the condition $\left|x_i^k-n/s\right|<n^{1-1/2k}\phi(n)$ for $1\leq i\leq s$, no matter how large $s$ is taken. Here, $\phi(n)$ is a function satisfying $\phi(n)\to 0$ as $n\to\infty$.

In \cite{Daemen2} and \cite{Daemen1}, Daemen showed that if we widen the permitted region slightly, we can once again guarantee solutions in the classical case. Specifically, he obtains a lower bound on the number of solutions under the condition
\begin{equation*}
\left|x_i-(n/s)^{1/k}\right|<cn^{1/2k},\quad(1\leq i\leq s),
\end{equation*}
for a suitably large constant $c$, and an asymptotic formula under the condition
\begin{equation*}
\left|x_i-(n/s)^{1/k}\right|<n^{1/2k+\epsilon},\quad(1\leq i\leq s).
\end{equation*}

In this note, we show that (a slight strengthening of) Wright's result remains true in the shifted case. Specifically, we prove the following.
\begin{theorem}\label{Thm}
Let $s,k\geq 2$ be natural numbers.
Fix $\bm{\theta}=(\theta_1,\dotsc,\theta_s)\in(0,1)^s$, and let $c,c'>0$ be suitably small constants which may depend on $s,k$ and $\bm{\theta}$. There exist arbitrarily large values of $\tau\in\mathbb{R}$ which cannot be approximated in the form (\ref{ineq}), with $0<\eta<c\tau^{1-2/k}$, subject to the additional condition that $\left|x_i-(\tau/s)^{1/k}\right|<c'\tau^{1/2k}$ for $1\leq i\leq s$.
\end{theorem}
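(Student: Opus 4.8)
The plan is to fix a large integer $m$ and to produce a real $\tau$ lying just above $C_0:=C_0(m)=\sum_{i=1}^s(m-\theta_i)^k$ which cannot be represented; letting $m\to\infty$ then yields arbitrarily large exceptional $\tau$. First I would observe that for $\tau$ within $O(m^{k-2})$ of $C_0$ the window condition $|x_i-(\tau/s)^{1/k}|<c'\tau^{1/2k}$ forces $(\tau/s)^{1/k}=m+O(1)$, so that every admissible tuple has the shape $x_i=m+a_i$ with $a_i\in\mathbb{Z}$ and $|a_i|\le Am^{1/2}$, where $A\asymp c'$ is small. The problem thus reduces to understanding the values $S(\mathbf a)=\sum_i(m-\theta_i+a_i)^k$ near $C_0$, and in particular to exhibiting a gap of length exceeding $2c\tau^{1-2/k}\asymp m^{k-2}$ into which $\tau$ may be placed.

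Next I would expand $S(\mathbf a)-C_0=\sum_{j=1}^k\binom{k}{j}\sum_i(m-\theta_i)^{k-j}a_i^j$ and sort by order in $m$. The leading contribution is $km^{k-1}\sum_i a_i$, and since the remaining terms have size $O(m^{k-1}A^2)$, choosing $c'$ (hence $A$) small enough forces $\sum_i a_i=0$ for any tuple with $|S(\mathbf a)-C_0|$ of order at most $m^{k-2}$; tuples with $\sum_i a_i\neq 0$ put $S$ at distance $\gg m^{k-1}$ from $C_0$ and may be discarded. Under the constraint $\sum_i a_i=0$, for tuples with $P:=\sum_i a_i^2$ bounded one finds
\[
S(\mathbf a)-C_0=m^{k-2}\,\kappa(\mathbf a)+O(m^{k-3}),\qquad \kappa(\mathbf a)=\tfrac{k(k-1)}{2}\Big(\sum_i a_i^2-2\sum_i\theta_i a_i\Big).
\]

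The crux is then the elementary inequality $\kappa(\mathbf a)>0$ for every nonzero $\mathbf a\in\mathbb{Z}^s$ with $\sum_i a_i=0$, and this is exactly where the hypothesis $\bm\theta\in(0,1)^s$ enters. Writing $Q=\sum_{a_i>0}a_i=\sum_{a_i<0}|a_i|$, the integer bound $a_i^2\ge|a_i|$ gives $\sum_i a_i^2\ge 2Q$, while $\theta_i<1$ gives $2\sum_i\theta_i a_i<2Q$, and the two combine to $\kappa(\mathbf a)>0$. Hence $\kappa_{\min}:=\min\{\kappa(\mathbf a):\mathbf a\neq\mathbf 0,\ \sum_i a_i=0\}$ is a positive constant depending only on $s,k,\bm\theta$. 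For bounded $P$ this gives $S(\mathbf a)-C_0\ge\kappa_{\min}m^{k-2}(1+o(1))$; for large $P$ one checks that the positive quadratic term $\tfrac{k(k-1)}{2}m^{k-2}P$ dominates the remaining contributions (all $O(m^{k-2}\sqrt{P}+m^{k-3}P^{3/2})$, using $|a_i|\le\sqrt{P}$), so again $S(\mathbf a)-C_0\gg m^{k-2}$. Thus the only admissible value of $S$ within $\tfrac12\kappa_{\min}m^{k-2}$ of $C_0$ is $S=C_0$ itself, attained at $\mathbf a=\mathbf 0$.

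Finally I would take $\tau=C_0+\tfrac14\kappa_{\min}m^{k-2}$, which lies in this gap and satisfies $(\tau/s)^{1/k}=m+O(1)$ as required, and note $\tau^{1-2/k}\asymp s^{1-2/k}m^{k-2}$. Every admissible $S$ then has $|S-\tau|\ge\tfrac14\kappa_{\min}m^{k-2}$, so choosing $c<\kappa_{\min}/(4s^{1-2/k})$ gives $|S-\tau|>c\tau^{1-2/k}>\eta$, and $\tau$ is not representable; letting $m\to\infty$ makes these $\tau$ arbitrarily large. I expect the main obstacle to be the term $-k(k-1)m^{k-2}\sum_i\theta_i a_i$ produced by the shifts: unlike the classical integer case it perturbs $S(\mathbf a)-C_0$ at exactly the scale $m^{k-2}$ of the gap, so the rigid integer-lattice structure underlying Wright's original argument is destroyed, and it is the positivity $\kappa(\mathbf a)>0$ — not an integrality statement — that must rescue the gap. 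A secondary technical point, dictating the smallness of $c'$, is the need to bound the higher-order terms uniformly in $P$ across the whole admissible window.
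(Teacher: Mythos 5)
Your argument is correct, and every step checks out: the window forces $x_i=m+a_i$ with $|a_i|\le Am^{1/2}$; integrality plus smallness of $A$ forces $\sum_i a_i=0$; the positivity $\kappa(\mathbf{a})>0$ via $a_i^2\ge|a_i|$ and $0<\theta_i<1$ is sound (coercivity of $\kappa$ on the hyperplane makes $\kappa_{\min}$ an attained positive constant, which your large-$P$ analysis implicitly supplies and which you should flag explicitly); and placing $\tau=C_0+\tfrac14\kappa_{\min}m^{k-2}$ in the resulting spectral gap delivers the theorem. However, your route genuinely differs from the paper's. The paper never anchors at an attained value: it engineers the non-representable target directly as $\tau_m=sm^k+km^{k-1}(s-\sum_i\theta_i)$, expands $\sum_i(x_i-\theta_i)^k$ around $m$ rather than around $m-\theta_i$, forces $\sum_i a_i=s$ by integrality exactly as you force $\sum_i a_i=0$, and then obtains the scale-$m^{k-2}$ separation from the pointwise bound $\sum_i(a_i-\theta_i)^2\gg 1$, immediate since an integer $a_i$ never equals $\theta_i\in(0,1)$. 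This sidesteps both of your main technical devices: no hyperplane positivity lemma is needed, and your bounded-versus-large-$P$ dichotomy disappears because the paper controls all higher terms uniformly by factoring out the quadratic, via $\sum_{j\ge3}\binom{k}{j}m^{k-j}\sum_i|a_i-\theta_i|^j\le\sum_{j\ge3}\binom{k}{j}m^{k-j}(c_3m^{1/2})^{j-2}\sum_i(a_i-\theta_i)^2$. What your approach buys in exchange is sharper structural information: it locates the exceptional $\tau$ inside an explicit gap of width $\asymp\kappa_{\min}m^{k-2}$ just above the representable value $C_0=\sum_i(m-\theta_i)^k$, and it isolates precisely how the shifts replace Wright's integer-lattice rigidity by the positivity of $\kappa$; the paper's choice buys brevity, with the positivity built into the construction of $\tau_m$ itself.
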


\begin{proof}
This follows the structure of Wright's proof in \cite{equalsq}, with minor adjustments to take into account the shifts present in our problem. As such, for $m\in\mathbb{N}$, we let $\tau_m=sm^{k}+km^{k-1}(s-\sum_{i=1}^s \theta_i)$, and we note that $\tau_m\to\infty$ as $m\to\infty$. Throughout the proof, we allow $c_1,c_2,\dotsc$ to denote positive constants which do not depend on $m$, although they may depend on the fixed values of $s,k, \bm{\theta}, c$ and $c'$. We also note that $\eta<c\tau^{1-2/k}$ implies that $\eta\ll m^{k-2}$.

Suppose $\tau_m$ satisfies (\ref{ineq}) with $0<\eta<c\tau_m^{1-2/k}$ and $\left|x_i-(\tau_m/s)^{1/k}\right|<c'\tau_m^{1/2k}$ for $1\leq i\leq s$. We write $x_i=m+a_i$, and observe that
\begin{align*}
m^{k-1}\left|a_i\right|&=m^{k-1}\left|x_i-m\right|\\
&\leq m^{k-1}\Big(\left|x_i-(\tau_m/s)^{1/k}\right|+\left|(\tau_m/s)^{1/k}-m\right|\Big)\\
&\leq c'm^{k-1}\tau_m^{1/2k}+\left|\tau_m/s-m^k\right|.
\end{align*}
Using the definition of $\tau_m$, we obtain
\begin{align*}
m^{k-1}\left|a_i\right|&\leq c_1m^{k-1}m^{1/2}+km^{k-1}(1-s^{-1}\sum_{i=1}^s \theta_i),
\end{align*}
and therefore $\left|a_i\right|\leq c_2 m^{1/2}$ for $1\leq i\leq s$. Expanding (\ref{ineq}), we see that
\begin{align}\label{binexpn}
\eta&> \left|\sum_{i=1}^s (x_i-\theta_i)^k - \tau_m\right|\nonumber\\
&=\left| \sum_{i=1}^s (m+a_i-\theta_i)^k - \Big(sm^{k}+km^{k-1}(s-\sum_{i=1}^s \theta_i)\Big)\right|\\
&\geq km^{k-1}\left|s-\sum_{i=1}^s a_i \right|-\left|\sum_{j=2}^k\binom{k}{j}m^{k-j}\sum_{i=1}^s(a_i-\theta_i)^j\right|.\nonumber
\end{align}
Rearranging, this gives
\begin{align*}
\left|s-\sum_{i=1}^s a_i \right|&<\eta k^{-1}m^{1-k}+\left|\sum_{j=2}^k\binom{k}{j}k^{-1}m^{1-j}\sum_{i=1}^s(a_i-\theta_i)^j\right|\\
&\leq  \eta k^{-1}m^{1-k}+\sum_{j=2}^k\binom{k}{j}k^{-1}m^{1-j} s(c_3 m^{1/2})^j\\
&\leq c_4.
\end{align*}
By choosing our original $c,c'$ to be sufficiently small, we may conclude  that $c_4\leq 1$, which implies that $\sum_{i=1}^s a_i = s$.
Substituting this back into (\ref{binexpn}), when $k=2$ we obtain
\begin{align*}
\eta&>\binom{k}{2}m^{k-2}\sum_{i=1}^s(a_i-\theta_i)^2,
\end{align*}
and consequently
\begin{align*}
\sum_{i=1}^s(a_i-\theta_i)^2 < c_5,
\end{align*}
which is a contradiction if we choose $c,c'$ sufficiently small, since we have $\sum_{i=1}^s(a_i-\theta_i)^2\gg 1$.

When $k\geq 3$, we obtain
\begin{align*}
\eta&>\left|\sum_{j=2}^k\binom{k}{j}m^{k-j}\sum_{i=1}^s(a_i-\theta_i)^j\right|\\
&\geq \binom{k}{2}m^{k-2}\sum_{i=1}^s(a_i-\theta_i)^2-\left|\sum_{j=3}^k\binom{k}{j}m^{k-j}\sum_{i=1}^s(a_i-\theta_i)^j\right|.
\end{align*}
Consequently,
\begin{align*}
\binom{k}{2}m^{k-2}&\sum_{i=1}^s(a_i-\theta_i)^2<\eta+\sum_{j=3}^k\binom{k}{j}m^{k-j}\sum_{i=1}^s\left|a_i-\theta_i\right|^j\\
&\leq \eta+\sum_{j=3}^k\binom{k}{j}m^{k-j}(c_3 m^{1/2})^{j-2}\sum_{i=1}^s (a_i-\theta_i)^2\\
&\leq \eta+c_6 m^{k-5/2}\sum_{i=1}^s (a_i-\theta_i)^2,
\end{align*}
and so
\begin{align*}
\sum_{i=1}^s(a_i-\theta_i)^2&<c_7+c_8 m^{-1/2}\sum_{i=1}^s(a_i-\theta_i)^2,
\end{align*}
which is again a contradiction when $m$ is large.

We conclude that for all sufficiently large $m$, it is impossible to approximate $\tau_m$ in the manner claimed. This completes the proof.
\end{proof} 

\begin{corollary}
For $s,k\geq 2$ natural numbers, $\bm{\theta}=(\theta_1,\dotsc,\theta_s)\in(0,1)^s$, and suitably small constants $C,C'>0$, there exist arbitrarily wide gaps between real numbers $\tau$ for which the system 
\begin{gather}\label{newsys}
\begin{gathered}
\left|(x_1-\theta_1)^k+\dotsc+(x_s-\theta_s)^k-\tau\right|<C\tau^{1-2/k}\\
\left|x_i-(\tau/s)^{1/k}\right|<C'\tau^{1/2k},\quad (1\leq i\leq s)
\end{gathered}
\end{gather}
has a solution in natural numbers $x_1,\dotsc,x_s$.
\end{corollary}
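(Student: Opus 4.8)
The plan is to show that the non-representable numbers $\tau_m$ furnished by Theorem~\ref{Thm} are far from isolated: just below each $\tau_m$ lies an interval, of length tending to infinity with $m$, containing no $\tau$ for which (\ref{newsys}) is soluble. Such intervals occur for arbitrarily large $m$, so the solution set has arbitrarily wide gaps; and these are genuine gaps, since the balanced choice $x_i=m+1$ makes $\tau=\sum_{i=1}^s(m+1-\theta_i)^k=\tau_m+O(m^{k-2})$ representable and thus places a solution immediately above each interval. Throughout I take $C,C'$ to be suitably small (no larger than the $c,c'$ of the theorem), and I retain the notation $\tau_m=sm^k+km^{k-1}(s-\sum_{i=1}^s\theta_i)$ and $x_i=m+a_i$, writing $L=\sum_{i=1}^s a_i$.

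Fix a small constant $c_9>0$ and consider $\tau=\tau_m-u$ for $0\le u\le c_9 m^{k-1}$. First I would check that the variables remain localised: since $\tau\asymp sm^k$ and $u\le c_9 m^{k-1}$, the centre satisfies $(\tau/s)^{1/k}=m+O(1)$, so exactly as in the theorem the condition $\left|x_i-(\tau/s)^{1/k}\right|<C'\tau^{1/2k}$ forces $\left|a_i\right|\le c_2 m^{1/2}$ (with $c_2$ depending on $C'$ and $c_9$). Performing the binomial expansion as in (\ref{binexpn}) and abbreviating the lower-order terms by $S=\sum_{j=2}^k\binom{k}{j}m^{k-j}\sum_{i=1}^s(a_i-\theta_i)^j$, a solution would give
\begin{equation*}
C\tau^{1-2/k}>\left|km^{k-1}(L-s)+S+u\right|.
\end{equation*}

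The argument then splits on the integer $L$. Exactly as in the theorem, the terms of $S$ with $j\ge3$ are of smaller order than the $j=2$ term, so $S=\binom{k}{2}m^{k-2}\sum_{i=1}^s(a_i-\theta_i)^2\,(1+o(1))$. If $L\ge s$, every term on the right is nonnegative, and since $\sum_{i=1}^s(a_i-\theta_i)^2\gg1$ we obtain $S\ge\binom{k}{2}\delta m^{k-2}(1+o(1))$ for a fixed $\delta>0$; taking $C$ small enough that $C\tau^{1-2/k}<\binom{k}{2}\delta m^{k-2}$ for large $m$ yields a contradiction. If $L\le s-1$, then $km^{k-1}(s-L)\ge km^{k-1}$, whereas $\left|a_i\right|\le c_2 m^{1/2}$ gives $\left|S\right|\le\binom{k}{2}sc_2^2 m^{k-1}(1+o(1))$ and $u\le c_9 m^{k-1}$; choosing $C'$ (and hence $c_2$) and $c_9$ small enough that $\binom{k}{2}sc_2^2+c_9<k$, the right-hand side is bounded below by a fixed positive multiple of $m^{k-1}$, which exceeds $C\tau^{1-2/k}\asymp m^{k-2}$ for large $m$, again a contradiction. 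Hence no $\tau\in[\tau_m-c_9 m^{k-1},\tau_m]$ is representable, and letting $m\to\infty$ gives gaps of unbounded length.

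The hard part is the case $L\le s-1$: one must ensure that the accumulated lower-order terms $S$, together with the downward shift $u$, cannot bridge the jump of size $km^{k-1}$ that separates the ``bands'' $L=s-1$ and $L=s$ among the attainable values of $\sum_{i=1}^s(x_i-\theta_i)^k$. This is precisely where the smallness of $C'$ enters, as it caps $\sum_{i=1}^s(a_i-\theta_i)^2$, and hence $S$, at order $c_2^2 m^{k-1}$; the whole matter reduces to the single inequality $\binom{k}{2}sc_2^2+c_9<k$, which is always achievable by taking $C'$ and $c_9$ small enough.
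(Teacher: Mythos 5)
Your proof is correct, but it takes a genuinely different route from the paper's. The paper gives a short perturbation argument that uses Theorem~\ref{Thm} purely as a black box: fixing a non-approximable $\tau_0$, it shows via two triangle-inequality estimates that any solution of (\ref{newsys}) for $\tau\in[\tau_0-\delta,\tau_0+\delta]$ with $\delta\leq C_0\tau_0^{1-2/k}$ would yield a solution of the original system for $\tau_0$ with constants $c,c'$, contradicting the choice of $\tau_0$; this produces a solution-free interval of radius $\asymp\tau_0^{1-2/k}$. You instead reopen the theorem's proof and run the binomial expansion uniformly over the one-parameter family $\tau=\tau_m-u$, $0\leq u\leq c_9m^{k-1}$, splitting on $L=\sum_{i=1}^s a_i$: the case $L\geq s$ reduces to the theorem's contradiction via $\sum_{i=1}^s(a_i-\theta_i)^2\gg 1$, while $L\leq s-1$ is handled by your band-gap inequality $\binom{k}{2}sc_2^2+c_9<k$, which correctly rules out cancellation — since $|S|+u<km^{k-1}\leq km^{k-1}(s-L)$, the quantity $km^{k-1}(L-s)+S+u$ stays of size $\gg m^{k-1}$, far exceeding $C\tau^{1-2/k}\asymp m^{k-2}$. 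Your route buys a quantitatively stronger conclusion: solution-free intervals of length $\asymp\tau^{1-1/k}$ rather than $\asymp\tau^{1-2/k}$. This is especially relevant when $k=2$, where the paper's radius $\asymp\tau_0^{1-2/k}$ is merely bounded, while yours grows like $\tau^{1/2}$, so your argument is the one that literally delivers ``arbitrarily wide'' gaps in that case. You also verify, via the choice $x_i=m+1$, that representable $\tau$ exist just above each gap, so the gaps separate genuine solutions — a point the paper leaves implicit. What the paper's approach buys in exchange is brevity and modularity: it never re-derives the theorem's internal estimates, whereas your argument duplicates them and must additionally check uniformity in $u$ (which you do, via $(\tau/s)^{1/k}=m+O(1)$ and the resulting bound $|a_i|\leq c_2m^{1/2}$ with $c_2$ controlled by $C'$).
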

\begin{proof}
By Theorem \ref{Thm}, we fix $\tau_0\in\mathbb{R}$ such that there is no solution in natural numbers $x_1,\dotsc,x_s$ to $\left|(x_1-\theta_1)^k+\dotsc+(x_s-\theta_s)^k-\tau_0\right|<c\tau_0^{1-2/k}$ with $\left|x_i-(\tau_0/s)^{1/k}\right|<c'\tau_0^{1/2k}$ for $1\leq i\leq s$.

 Let $0<\delta\leq C_0\tau_0^{1-2/k}$ for some $C_0>0$, and let $\tau\in[\tau_0-\delta,\tau_0+\delta]$. Let $C,C'>0$ be suitably small constants depending on $c,c'$ and $C_0$ to be chosen later, and suppose that $x_1\dotsc,x_s\in\mathbb{N}$ are such that (\ref{newsys}) is satisfied.
 
We have 
\begin{align*}
\left|(\tau/s)^{1/k}-(\tau_0/s)^{1/k}\right|&\leq s^{-1/k}\left|(\tau_0-\delta)^{1/k}-\tau_0^{1/k}\right|\\
&\leq C_1 \delta\tau_0^{1/k-1},
\end{align*}
and consequently
\begin{align*}
\left|x_i-(\tau_0/s)^{1/k}\right|&\leq \left|x_i-(\tau/s)^{1/k}\right|+\left|(\tau/s)^{1/k}-(\tau_0/s)^{1/k}\right|\\
&< C'\tau^{1/2k}+C_1 \delta\tau_0^{1/k-1}\\
&\leq C'(\tau_0+\delta)^{1/2k}+ C_1C_0\tau_0^{-1/k}\\
&\leq C_2 \tau_0^{1/2k}.
\end{align*}
We also see that
\begin{align*}
\left|\sum_{i=1}^s (x_i-\theta_i)^k-\tau_0\right|&\leq \left|\sum_{i=1}^s (x_i-\theta_i)^k-\tau\right|+\left|\tau-\tau_0\right|\\
&< C\tau^{1-2/k}+\delta\\
&\leq C(\tau_0+\delta)^{1-2/k}+C_0\tau_0^{1-2/k}\\
&\leq C_3 \tau_0^{1-2/k}.
\end{align*}
Choosing $C_0,C,C'$ small enough to ensure that $C_2\leq c'$ and $C_3\leq c$ gives a contradiction to our original choice of $\tau_0$. Consequently, there is no solution to (\ref{newsys}) in an interval of radius $\asymp \tau_0^{1-2/k}$ around $\tau_0$.
\end{proof}
The author would like to thank Trevor Wooley for his supervision, and the anonymous referee for useful comments.

\newcommand{\noop}[1]{}

\end{document}